\numberwithin{equation}{section} \theoremstyle{plain}
\newtheorem{theorem}{Theorem}[section]
\newtheorem{lemma}[theorem]{Lemma}
\theoremstyle{definition}
\newtheorem{definition}{Definition}[section]
\newtheorem{example}{Example}[section]
\theoremstyle{remark}
\newtheorem{remark}{\rm\bf Remark}[section]
\begin{document}

\title{On Lin's condition for products of random variables with joint singular distribution}\author{Alexander Il'inskii$^{1}$ and  Sofiya  Ostrovska$^{2}$}
\date{}

\maketitle

\begin{center}
 {\it $^1$Department of Mathematics and Informatics, Karazin National University, Kharkov, Ukraine\\
 E-mail: iljinskii@univer.kharkov.ua }
\end{center}
\begin{center}
 {\it $^2$Department of  Mathematics, Atilim University, Ankara, TURKEY\\
E-mail: sofia.ostrovska@atilim.edu.tr}\\
\end{center}

\begin{abstract} Lin's condition is used to establish the moment determinacy/indeterminacy of absolutely continuous probability distributions. Recently, a number of papers related to Lin's condition for functions of random variables have emerged. In this work, Lin's condition is studied for the product of random variables with given densities in the case when their joint distribution is singular.\end{abstract}

\noindent {\small \textbf{Keywords:} random variable, singular  distribution, Lin's condition
\vspace{0.2cm}}

\noindent {\small \textbf{Mathematics Subject Classifications:}
60E05}\vspace{0.2cm}

\section{Introduction}

Lin's condition plays a significant role in the investigation of the moment determinacy of absolutely continuous probability distributions. Generally speaking, it is used along with Krein's logarithmic integral to establish so-called ``converse criteria". See, for example, Theorems 5--10 in \cite[Section 5]{recent}.
This condition was introduced and applied by G. D. Lin \cite{lin}, while the name `Lin's condition' was put forth in
\cite{bernoulli}. Let us recall the pertinent notions.

\begin{definition} Let $f$ be a probability density continuously differentiable on $(0,\infty)$. The function
\begin{equation} \label{linfun}
L_f(x):=-\frac{xf^\prime(x)}{f(x)}
\end{equation}
is called \textit{Lin's function} of $f$.
\end{definition}

Formula \eqref{linfun} implies that Lin's function of $f$ is defined only at the points where $f$ does not vanish. In this article, we consider   probability densities of positive random variables whose Lin's functions are defined for all $x>0.$

\begin{definition}\label{lincond} Let $f\in C^1(0,\infty)$ be a probability density of a positive random variable. It is said that $f$ satisfies \textit{ Lin's condition} on $(x_0,\infty)$ if $L_f(x)$ is monotone increasing on $(x_0,\infty)$ and $\displaystyle \lim_{x\rightarrow +\infty}L_f(x)=+\infty$.
\end{definition}

 Due to the importance of this condition for establishing the moment (in)determinacy of absolutely continuous probability distributions (see, \cite{recent, stirzaker} and references therein), it has  to be examined how operations on random variables impact Lin's condition. Recently, Kopanov and Stoyanov in \cite{kopanov} proved that if a density $f$ of a positive random variable $X$ satisfies Lin's condition, then the densities of $X^r, r>0$ and $\ln X$  satisfy this condition, too. In addition, if $L_f(x)/x\rightarrow +\infty$ as $x\rightarrow +\infty,$ then the density of $e^X$ also satisfies Lin's condition as well.

The moment problem for products of random variables leads naturally to the
question concerning Lin's condition for the densities of products. This question was placed in
\cite{kopanov}, where it was inquired whether Lin's condition is inherited by the products of the random variables whose densities satisfy Lin's condition. It was also conjectured that the answer is affirmative   in the case of an absolutely continuous joint distribution. Although this is valid for independent random variables, the results of \cite{arxiv} show that, in general, this assertion may not be true. For more information  on the moment problem for products we refer to \cite{linst1, linst2, counter} and \cite[Section 6]{recent}.

In the present paper, this problem is investigated for the case when joint distribution of factors is singular rather than absolutely continuous. The main result of this work is the next statement.

\begin{theorem}\label{th1} Let  $f_1$ and $f_2$  be  densities of  positive random variables, both satisfying Lin's condition on $(0,+\infty)$.
Then, there exists a random vector $(\xi_1,\xi_2)$ possessing a singular distribution $P$ and satisfying the following conditions:
\begin{enumerate} \item $\xi_1$ and $\xi_2$ have densities $f_1$ and $f_2$ respectively;

\item the density $g$ of the product $\xi_1\cdot\xi_2$ is continuously differentiable on $(0,+\infty);$

\item the following equalities hold: \begin{equation}\label{limsupinf}
\limsup_{x\rightarrow +\infty} L_g(x)=+\infty\,,\quad\liminf_{x\rightarrow +\infty} L_g(x)=-\infty\,.
\end{equation}
\end{enumerate}
\end{theorem}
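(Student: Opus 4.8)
The plan is to realize $P$ as a probability measure carried by a countable union of $C^1$ curves in $(0,\infty)^2$ — hence automatically singular with respect to planar Lebesgue measure — obtained by locally perturbing the comonotone coupling of $\xi_1$ and $\xi_2$ along a sequence of short intervals receding to $+\infty$. Writing $F_1,F_2$ for the distribution functions, I would set $\psi:=F_2^{-1}\circ F_1$; since $f_1,f_2\in C^1(0,\infty)$ are strictly positive, $\psi$ is a $C^2$ increasing bijection of $(0,\infty)$ with $\psi'>0$, and $S_0(x):=x\psi(x)$ is likewise a $C^2$ increasing bijection of $(0,\infty)$. With $\xi_2=\psi(\xi_1)$ as the base coupling one gets marginals $f_1,f_2$ and a product $\xi_1\xi_2=S_0(\xi_1)$ with $C^1$, strictly positive density $g_0(y)=f_1(S_0^{-1}(y))/S_0'(S_0^{-1}(y))$. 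This coupling will be kept everywhere except on disjoint intervals $I_n=[a_n,b_n]$, $b_n<a_{n+1}$, $a_n\to+\infty$; note the images $S_0(I_n)$ are then disjoint and $t_n:=S_0(a_n)\to+\infty$.

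On each $I_n$ I would replace $\psi$ by a continuous but non-monotone map $T_n\colon I_n\to\psi(I_n)$ with three properties: (i) $T_n$ transports $f_1\,dx|_{I_n}$ onto $f_2\,dy|_{\psi(I_n)}$; (ii) $T_n$ agrees with $\psi$, together with its first two derivatives, near the endpoints of $I_n$; (iii) the single descending excursion of $T_n$ is gentle enough that $S_n(x):=x\,T_n(x)$ is still a strictly increasing $C^2$-diffeomorphism of $I_n$ onto $S_0(I_n)$ — concretely, writing $T_n=\psi+\rho_n$ with $\rho_n$ a small bump supported in the interior of $I_n$ whose steepest slope exceeds $\psi'$ in modulus (so that $T_n$ is non-monotone) but stays below $T_n(x)/x$ (so that $T_n+xT_n'>0$). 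Off $\bigcup_nI_n$ the coupling stays comonotone. Since no $\xi_1$-mass is displaced, $\xi_1$ still has density $f_1$; since $T_n$ transports $f_1\,dx|_{I_n}$ to $f_2\,dy|_{\psi(I_n)}$ and $\psi$ does the rest, $\xi_2$ still has density $f_2$; and $P$ lives on the graph of a piecewise-$C^1$ function, a planar null set, so $P$ is singular: this gives condition~(1). Because $\xi_1\xi_2=S_0(\xi_1)$ off $\bigcup_nI_n$ and $=S_n(\xi_1)$ on $I_n$, its density equals $g_0$ on $(0,\infty)\setminus\bigcup_nS_0(I_n)$ and equals $f_1(S_n^{-1})/S_n'(S_n^{-1})$ on each $S_0(I_n)$; the second-order matching at $\partial I_n$ makes these pieces fit together in $C^1$, and all are strictly positive, so $g\in C^1(0,\infty)$, which is condition~(2).

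For condition~(3) I would use that, for $x=S_n^{-1}(y)\in I_n$,
\[
\frac{g'(y)}{g(y)}=\frac{1}{S_n'(x)}\left(\frac{f_1'(x)}{f_1(x)}-\frac{S_n''(x)}{S_n'(x)}\right),\qquad S_n''(x)=2\psi'(x)+x\psi''(x)+2\rho_n'(x)+x\rho_n''(x).
\]
Since every $x\in I_n$ is large (of order $a_n$), a bump $\rho_n$ whose second derivative is large and negative where its descent begins and large and positive where it ends makes $S_n''$ — and hence, up to the negligible term $f_1'/f_1$, the quantity $-g'/g$ — large and negative and then large and positive on $S_0(I_n)$; equivalently $g$ climbs steeply and then falls steeply there. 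As the relevant arguments $y$ are of order $t_n\to+\infty$, this forces $L_g(y)=-y\,g'(y)/g(y)$ below $-n$ at one point of $S_0(I_n)$ and above $n$ at another, which yields \eqref{limsupinf}.

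The main obstacle is the construction of the smooth local ``shuffle'' $T_n$ in the second step. It has to be genuinely non-monotone — the unique increasing transport of $f_1|_{I_n}$ onto $f_2|_{\psi(I_n)}$ is $\psi$ itself, so without a descending excursion there is no freedom whatsoever — while the associated product map $S_n=x\,T_n$ must avoid folds (a fold would give $g$ a square-root singularity, destroying $C^1$-smoothness), must match $\psi$ to second order at $\partial I_n$, and must still carry enough curvature to send $L_g$ to $\pm\infty$. Showing that such a $T_n$ exists reduces to solving the one-dimensional mass-transport equation for $\rho_n$ under these inequality constraints, and forces $I_n$ to be chosen short enough (and far enough out) that the competing requirements ``steep enough to be non-monotone'' and ``gentle enough that $x\,T_n'>-T_n$'' can be reconciled while $g$ stays positive. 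Once this one lemma is secured, conditions (1)--(3) follow along the lines above.
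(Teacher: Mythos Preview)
Your plan and the paper's share the same starting point---the comonotone coupling $\psi=F_2^{-1}\circ F_1$ supported on a single increasing curve---but diverge in how the perturbation is carried out. You try to keep the support as the graph of a \emph{single} (now non-monotone) function by replacing $\psi$ on each $I_n$ with a transport map $T_n$; the paper instead lets the support spread over \emph{several} arcs. Concretely, inside each rectangle $\Pi_n$ the paper subtracts from the comonotone measure a piece of density $\tau(z)=\varepsilon\sin^2(\nu z)$ along a short arc $l^{(1)}$ of the curve, shifts that mass vertically by a fixed amount $d$ onto a translated arc $l^{(2)}$, and then performs a compensating subtraction-and-shift on a disjoint arc $l^{(3)}\to l^{(4)}$; the four terms are arranged so that both marginals are preserved \emph{automatically}, with no transport equation to solve. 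Because for the relevant $z$ the hyperbola $\{x_1x_2=z\}$ meets only $l^{(1)}$, the product density there is simply $\bigl(f_1(\rho(z))-\varepsilon\sin^2(\nu z)\bigr)$ times a smooth positive factor, and taking $\nu$ large forces $L_g$ to oscillate without bound.

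The gap in your proposal is exactly the step you yourself flag as ``the main obstacle'': you have not constructed $T_n$. The requirements you impose---$T_n$ pushes $f_1|_{I_n}$ onto $f_2|_{\psi(I_n)}$, matches $\psi$ to second order at $\partial I_n$, is genuinely non-monotone, yet $S_n=xT_n$ remains a diffeomorphism, and $\rho_n''$ is large enough in both signs to drive $L_g$ past $\pm n$---pull against one another, and the transport constraint is nonlinear: once $T_n$ is non-monotone you cannot simply declare $\rho_n$ to be ``a small bump'' and choose its shape freely, since at any $y$ with several preimages one needs $\sum_k f_1(x_k)/|T_n'(x_k)|=f_2(y)$. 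Nothing here is obviously impossible, but until this construction is actually supplied the proof is incomplete; the paper's four-arc device shows that insisting on a single graph is an unnecessary self-imposed difficulty, since abandoning it turns the marginal-preservation problem from a nonlinear ODE into bookkeeping.
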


Obviously, equalities \eqref{limsupinf} imply that $g$ does not satisfy Lin's condition on any interval $(x_0,\infty)$.

\begin{remark} Assuming that  the densities of random variables of $\xi_1$ and $\xi_2$ satisfy Lin's condition on $(0,+\infty),$ we deduce that, in general, this property is not inherited by their product $\xi_1\xi_2$ in the case of a singular joint distribution of $\xi_1$ and $\xi_2.$\end{remark}

\section{Proof of the main theorem}

We start with the construction of a singular distribution with given marginal positive densities, which will be used in the sequel.

\begin{lemma}\label{lem1} Let $f_1$ and $f_2$ be continuous probability densities positive on $(0,+\infty)$ and vanishing elsewhere. Then, there exists a singular distribution $\tilde{P}$ concentrated on a curve $x_2=\varphi(x_1)$, where $\varphi$  is a continuously differentiable  strictly increasing function, and so that the projections of $\tilde{P}$ on the coordinate axes have the given densities $f_1(x_1)$ and $f_2(x_2)$.\end{lemma}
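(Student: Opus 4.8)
The plan is to realize $\tilde P$ as the \textbf{monotone (quantile) coupling} of $f_1$ and $f_2$. Let $F_1$ and $F_2$ denote the distribution functions of $f_1$ and $f_2$. Because each $f_i$ is continuous, strictly positive on $(0,+\infty)$, and vanishes off $(0,+\infty)$, each $F_i$ is a $C^1$ strictly increasing bijection of $(0,+\infty)$ onto $(0,1)$ with $F_i' = f_i > 0$ there. I would then define
\[
\varphi := F_2^{-1}\circ F_1 : (0,+\infty)\longrightarrow(0,+\infty).
\]
As a composition of strictly increasing bijections, $\varphi$ is a strictly increasing bijection; by the inverse function theorem $F_2^{-1}\in C^1(0,1)$, hence $\varphi\in C^1(0,+\infty)$, with the explicit formula
\[
\varphi'(x_1)=\frac{f_1(x_1)}{f_2\bigl(\varphi(x_1)\bigr)}>0 ,
\]
which I would record, since it will presumably be needed when computing the density $g$ of $\xi_1\xi_2$ in the proof of Theorem~\ref{th1}.

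Next I would construct the random vector. Take $\xi_1$ with density $f_1$, set $\xi_2:=\varphi(\xi_1)$, and let $\tilde P$ be the joint law of $(\xi_1,\xi_2)$. By construction $\tilde P$ is carried by the graph $\Gamma=\{(x_1,\varphi(x_1)):x_1>0\}$; being the $C^1$ image of an interval, $\Gamma$ has zero planar Lebesgue measure, so $\tilde P$ is singular. The first marginal of $\tilde P$ is $f_1$ by definition. For the second marginal, observe that $F_1(\xi_1)$ is uniformly distributed on $(0,1)$ (as $F_1$ is continuous and strictly increasing), so $\xi_2=F_2^{-1}\bigl(F_1(\xi_1)\bigr)$ has distribution function $F_2$ and therefore density $f_2$. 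This establishes all the claimed properties.

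I do not expect a genuine obstacle here: the argument is the standard quantile-transform construction. The only points needing a little care are (i) that $\varphi$ is well-defined, strictly increasing, and $C^1$ on the whole of $(0,+\infty)$ — this is exactly where continuity and strict positivity of $f_1,f_2$ on $(0,+\infty)$ enter, through the inverse function theorem — and (ii) the observation that a $C^1$ curve is planar-null, which yields the singularity of $\tilde P$. Everything else is routine bookkeeping with distribution functions.
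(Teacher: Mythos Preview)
Your proof is correct and more direct than the paper's. The paper constructs $\tilde P$ as the weak limit of a sequence of absolutely continuous distributions $P_m$: it partitions $(0,\infty)$ dyadically according to the quantiles of $f_1$ and $f_2$, puts the mass of $P_m$ on the resulting staircase of $2^m$ rectangles with density $2^m f_1(x_1)f_2(x_2)$, checks that each $P_m$ has the right marginals, and then lets the rectangles shrink to a curve $K$; the equation $x_2=\varphi(x_1)$ of $K$ is recovered through a Mean-Value-Theorem computation of the limiting aspect ratio of the rectangles, which yields the initial value problem $\varphi'(x_1)=f_1(x_1)/f_2(\varphi(x_1))$, $\varphi(0)=0$. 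Your formula $\varphi=F_2^{-1}\circ F_1$ is exactly the solution of that problem, so the two constructions produce the same coupling. Your route is cleaner for the purposes of this lemma: the quantile transform delivers $\varphi$, its $C^1$ regularity, strict monotonicity, the singularity of $\tilde P$, and both marginals in a few lines, and it makes the derivative formula $\varphi'=f_1/(f_2\circ\varphi)$---which is precisely what is used in Lemma~\ref{lem2} and in the proof of Theorem~\ref{th1}---immediate rather than the outcome of a limiting argument. The paper's approach has the side benefit of exhibiting the singular law explicitly as a limit of absolutely continuous ones, but that feature is not used anywhere else in the paper.
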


\begin{proof} To begin with, consider the set of points $\{u_{ij}:i\in \mathbb{N}_0, 0\leqslant j\leqslant 2^i\}$ defined by the conditions:
\begin{equation}\label{upoints}
u_{i0}=0,\quad\int_{u_{ij}}^{u_{i,j+1}}f_1(x_1)dx_1=\frac{1}{2^i}\;\;(j=0,1,\dots ,2^i-1),\quad u_{i,2^i}=+\infty\,.
\end{equation}
Clearly \eqref{upoints} defines all of the points $u_{ij}$ uniquely. Besides, for all $i\in \mathbb{N}_0,$ one has:
\begin{equation}\label{ineq} 0=u_{i0}<u_{i1}<\dots <u_{i, 2^i}=+\infty\;\;\mathrm{and}\;\; u_{ij}=u_{i+1, 2j}, \;0\leq j\leq 2^i.\end{equation}
Likewise, let  $\{v_{ij}:i\in \mathbb{N}_0, 0\leqslant j\leqslant 2^i\}$ be the set of points specified by  the conditions:
\begin{equation*}\label{vpoints}
v_{i0}=0,\quad\int_{v_{ij}}^{v_{i,j+1}}f_2(x_2)dx_2=\frac{1}{2^i}\;\;(j=0,1,\dots ,2^i-1),\quad v_{i,2^i}=+\infty\,.
\end{equation*}
Conditions similar to \eqref{ineq} are satisfied for these points, too.

For each $m\in \mathbb{N},$ consider the set
\begin{equation*}\label{km}
K_m:=\bigcup_{j=0}^{2^m-1}(u_{mj},u_{m,j+1})\times
(v_{mj},v_{m,j+1})\subset \mathbb{R}^2
\end{equation*}
and the two-dimensional probability distribution $P_m$ whose density is given by:
\begin{equation*}\label{denfm}f_m(x_1,x_2)=\left\{\begin{array}{cl}2^m f_1(x_1)f_2(x_2)&\text{when}\;\;(x_1,x_2)\in K_m,\\
0&\text{elsewhere}.\end{array}\right.\end{equation*}
The projections of $P_m$ on the $x_1$- and $x_2$-axes have densities $f_1(x_1)$ and $f_2(x_2)$, respectively. Indeed, for any given $m\in \mathbb{N},$ opt for $x_1^*\in (u_{mj}, u_{m,j+1})$ and find $$\int_{-\infty}^{+\infty} f_m(x_1^*,x_2)dx_2=
2^m f_1(x_1^*)\int_{v_{mj}}^{v_{m,j+1}}f_2(x_2)dx_2=f_1(x_1^*).$$ In the same manner, it can be established that the projection of $P_m$ on the $x_2$-axis has density $f_2.$
The sequence $P_m$ of probability distributions is weakly convergent to a two-di\-men\-sional distribution $\tilde{P}$ whose support $\displaystyle K=\cap_{m=1}^\infty K_m $ has zero Lebesgue's measure in $\mathbb{R}^2.$ What is more, $K$ is a continuous curve passing through all points $(u_{ij}, v_{ij}).$

Next, let us derive an equation of $x_2=\varphi(x_1)$ of the curve $K.$ Select a point $(a,b)\in K$ and consider the sequence of rectangles $I_m=J_m\times H_m,$ where $J_m=(u_{mj},u_{m,j+1}), H_m=(v_{mj}, v_{m,j+1})\;\text{with}\;j\; \text{such\; that}\; (a,b)\in I_m.$ Directly from  the construction of sets $K_m,$ one has:
$$
\int_{J_m}f_1(x_1)dx_1=\int_{H_m}f_2(x_2)dx_2,
$$
whence, by the Mean Value Theorem:
$$
f_1(x_1^*)|J_m|=f_2(x_2^*)|H_m|\;\text{for\;some}\;x_1^*\in J_m,\;x_2^*\in H_m.
$$
Consequently,
$$
\frac{|H_m|}{|J_m|}=\frac{f_1(x_1^*)}{f_2(x_2^*)}.
$$
Since all rectangles $I_m$ contain $(a,b),$ taking $m\to\infty,$ implies that $x_1^*\to a, x_2^*\to b$ and the ratio $\displaystyle\frac{|H_m|}{|J_m|}$ approaches the slope $k$ of the tangent line to $K$ at $(a,b).$  Thence, after passing to limit as $m\to\infty,$ one obtains:
$$k=\frac{f_1(a)}{f_2(b)}$$ or, equivalently, $$\varphi^\prime(a)=\frac{f_1(a)}{f_2(\varphi(a))}. $$ Due to the arbitrary selection of $a$, it follows that $x_2=\varphi(x_1)$ satisfies the initial value problem below:
\begin{equation}
\label{ivp}(x_2)^\prime=\frac{f_1(x_1)}{f_2(x_2)},\quad x_2(0)=0.
\end{equation}
Since $\varphi^\prime(x_1)>0$ for all $x_1\in (0,\infty),$ it follows that $\varphi$ is strictly increasing.
\end{proof}

\begin{example}
If $f_1=f_2$, that is random variables $\xi_1$ and $\xi_2$ are identically distributed, then $K$ is the straight line $x_2=x_1.$\end{example}

\begin{lemma}\label{lem2}Let $(\xi_1,\xi_2)$ be a random vector with distribution $\tilde{P}$ constructed  above. Then, the density
$g(z,\tilde{P})$ of the product $\xi_1\cdot \xi_2$ is given by:
\begin{equation}\label{pz}
g(z,\tilde{P})=f_1(x_1(z))\frac{x_1(z)}{z+\varphi^\prime(x_1(z))x_1^2(z)}.
\end{equation}\end{lemma}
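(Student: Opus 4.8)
The plan is to derive the density of the product $Z=\xi_1\xi_2$ directly from the description of $\tilde P$ as the distribution concentrated on the curve $x_2=\varphi(x_1)$, with $\xi_1$ having density $f_1$. Since $\tilde P$ lives on this curve, the vector $(\xi_1,\xi_2)$ is a deterministic function of $\xi_1$ alone: $(\xi_1,\xi_2)=(\xi_1,\varphi(\xi_1))$. Hence $Z=\xi_1\varphi(\xi_1)=:h(\xi_1)$, and I would compute the density of $Z$ via the standard one-dimensional change-of-variables formula, using that $h$ is a strictly increasing $C^1$ bijection of $(0,\infty)$ onto $(0,\infty)$.

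First I would verify that $h(x_1):=x_1\varphi(x_1)$ is strictly increasing and continuously differentiable on $(0,\infty)$, so that its inverse $x_1=x_1(z)$ exists, is $C^1$, and strictly increasing. Strict monotonicity is immediate because both $x_1>0$ and $\varphi(x_1)>0$ are strictly increasing (the latter by Lemma~\ref{lem1}), and differentiability follows from $\varphi\in C^1$. Next I would write the change-of-variables formula: for a positive random variable $\xi_1$ with density $f_1$ and a strictly increasing $C^1$ map $h$, the density of $h(\xi_1)$ at $z$ is
\begin{equation*}
g(z,\tilde P)=f_1\bigl(x_1(z)\bigr)\cdot\frac{1}{h^\prime\bigl(x_1(z)\bigr)}.
\end{equation*}
Then I would compute $h^\prime(x_1)=\varphi(x_1)+x_1\varphi^\prime(x_1)$. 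To match the form claimed in \eqref{pz}, I would use the relation $\varphi(x_1(z))=z/x_1(z)$, which holds because $z=x_1(z)\varphi(x_1(z))$ by definition of $x_1(z)$. Substituting gives
\begin{equation*}
h^\prime\bigl(x_1(z)\bigr)=\frac{z}{x_1(z)}+x_1(z)\varphi^\prime\bigl(x_1(z)\bigr)=\frac{z+\varphi^\prime(x_1(z))\,x_1^2(z)}{x_1(z)},
\end{equation*}
and hence $g(z,\tilde P)=f_1(x_1(z))\,x_1(z)\big/\bigl(z+\varphi^\prime(x_1(z))x_1^2(z)\bigr)$, which is exactly \eqref{pz}.

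The main obstacle I anticipate is making rigorous the step that $\tilde P$ really is the pushforward of the density-$f_1$ law of $\xi_1$ under $x_1\mapsto(x_1,\varphi(x_1))$ — that is, that conditioning on the curve does not distort the marginal. This needs the fact, implicit in the construction in Lemma~\ref{lem1}, that the map sending $\xi_1$ to the unique point of the curve above it preserves the $f_1$-marginal (equivalently, that the weak limit $\tilde P$ of the $P_m$ has $x_1$-marginal $f_1$, which is already established there since every $P_m$ has $x_1$-marginal $f_1$). Given that, the argument is a routine one-variable transformation; the only care needed is to confirm $h$ maps onto all of $(0,\infty)$, i.e. $h(0^+)=0$ and $h(+\infty)=+\infty$, which follows from $\varphi(0)=0$, $\varphi$ increasing, and $\varphi$ being unbounded (since it is the inverse-type solution of \eqref{ivp} with both $f_1,f_2$ densities on $(0,\infty)$). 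With these checks in place, formula \eqref{pz} follows.
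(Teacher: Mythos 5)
Your argument is correct, and it reaches \eqref{pz} by a cleaner route than the paper. You exploit at the outset that $\tilde P$ is carried by the graph of $\varphi$, so that $\xi_2=\varphi(\xi_1)$ almost surely and $Z=\xi_1\xi_2=h(\xi_1)$ with $h(x)=x\varphi(x)$; then \eqref{pz} is just the one-dimensional change-of-variables formula $g(z,\tilde P)=f_1(x_1(z))/h^\prime(x_1(z))$ together with the substitution $\varphi(x_1(z))=z/x_1(z)$. The paper instead works from first principles: it writes the increment $F(z+h)-F(z)$ of the distribution function of the product as $\int_{x_1(z)}^{x_1(z+h)}f_1\,dx_1$, and then uses the Mean Value Theorem and the two expressions for $x_2(z+h)-x_2(z)$ (one via $\varphi^\prime$, one via the hyperbola $x_2=z/x_1$) to compute the limit of the difference quotient — in effect re-deriving by hand exactly the transformation formula you invoke. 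What your approach buys is brevity and transparency, and it makes explicit that $x_1(z)$ is the inverse of $h$; the paper's approach buys self-containedness, and its increment analysis is reused later (the function $\rho$ in \eqref{rhoz} is your $h^{-1}$, whose differentiability the paper establishes separately in the proof of the main theorem, whereas your route gets it from the inverse function theorem since $h^\prime=\varphi+x\varphi^\prime>0$). The points you flag as potential obstacles are not real gaps: the $x_1$-marginal of $\tilde P$ being $f_1$ is part of the statement of Lemma \ref{lem1}, and surjectivity of $h$ onto $(0,\infty)$ needs only that $\varphi$ is positive, increasing and $h(0^+)=0$, $h(+\infty)=+\infty$, which follow from the construction (indeed $\varphi$ is unbounded because $f_2$ is a density positive on all of $(0,\infty)$, by \eqref{ivp} and the equal-mass condition defining the curve).
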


\begin{proof} First, consider the distribution function of the product $\xi_1\cdot \xi_2$:
$$
F(z)=\textbf{P}\{\xi_1\cdot\xi_2<z\},\;z>0
$$
and the set of hyperbolae in the first quadrant
$$
\Gamma_z:=\{(x_1,x_2):x_1>0,x_2>0,x_1x_2=z\}.
$$
Given $z>0,$ denote $K\cap\Gamma_z=(x_1(z),x_2(z)).$ Select a small $h>0$ and obtain:
\begin{equation*}
\begin{split}
F(z+h)-F(z)&=\textbf{P}\big((\xi_1,\xi_2)\in \{(x_1,x_2):
x_2=\varphi(x_1),\;x_1(z)\leqslant x_1\leqslant x_1(z+h)\}\big)\\
&=\int_{x_1(z)}^{x_1(z+h)}f_1(x_1)dx_1=\int_{x_2(z)}^{x_2(z+h)}f_2(x_2)dx_2.
\end{split}
\end{equation*}
Applying the Mean Value Theorem yields:
$$
f_1(x_1^*)\left[x_1(z+h)-x_1(z)\right]=f_2(x_2^*)\left[x_2(z+h)-x_2(z)\right]
$$
for some $x_1^*$ and $x_2^*$
within the intervals of integration. As $h\to 0,$ one has: $x_1^*\to x_1(z)$ and, correspondingly,  $f_1(x_1^*)\to f_1(x_1(z)).$
Now, one can write that
$$
x_2(z+h)-x_2(z)=\left[\varphi^\prime(x_1(z))+o(1)\right]\left(x_1(z+h)-x_1(z)\right),
$$
while, on the other hand,
$$
x_2(z+h)-x_2(z)=\frac{z+h}{x_1(z+h)}-\frac{z}{x_1(z)}.
$$
Equating the expressions in the right-hand sides of the last two formulae leads to:
\begin{align*}
x_1(z+h)-x_1(z)=\frac{hx_1(z)}{z+\left[\varphi^\prime(x_1(z))+o(1)\right]x_1(z+h)x_1(z)}\,.
\end{align*}
Passing to limit as $h\rightarrow 0,$ one arrives at:
\begin{equation*}
\begin{split}
g(z,P)&=\lim_{h\to 0}\frac{F(z+h)-F(z)}{h}=\lim_{h\to 0} f_1(x_1^*)\frac{x_1(z)}{z+\left[\varphi^\prime(x_1(z))+o(1)\right] x_1(z+h)x_1(z)}\\
&=f_1(x_1(z))\frac{x_1(z)}{z+\varphi^\prime(x_1(z))x_1^2(z)}\,.
\end{split}
\end{equation*}
As a result, we obtain the density $g(z,P)$ of the product $\xi_1\cdot \xi_2$ in the form \eqref{pz}.
Similarly,  the expression for $g(z, P)$ in terms of $f_2$ can be derived.\end{proof}

\begin{example}\label{ex2}If $f_1=f_2=f,$ then $K=\{(x_1, x_2):x_1=x_2\},\;x_1(z)=\sqrt{z}$, and \eqref{pz} implies that
$$
g(z,\tilde P)=\frac{f(\sqrt{z})}{2\sqrt{z}},
$$
whence
$$
L_g(z)=\frac{1}{2}+\frac{1}{2}L_{f}(\sqrt{z}),
$$
demonstrating that $g$ satisfies Lin's condition whenever $f_1$ does.\end{example}

Finally, we present the proof of Theorem \ref{th1}.

\begin{proof}
To construct a singular probability distribution $P$ in $\mathbb{R}^2$ satisfying the conditions of Theorem \ref{th1}, first consider its construction inside a single rectangle $\Pi=[a,b]\times [\varphi (a), \varphi(b)].$
Denote by $Q$ the restriction of  distribution $\tilde P$ constructed in Lemma \ref{lem1} on rectangle $\Pi$.
Select a small $\delta >0$. Conditions on the value of $\delta$ will be specified later.

For each $x>0$ and $z>0,$ define the function $x=\rho(z)$ by the condition:
 \begin{equation}\label{rhoz}z=x\varphi(x).\end{equation}
  Obviously, $\rho$ is a continuous strictly increasing function on $(0,\infty).$

Then $a=\rho (s), b=\rho (t)$
for some $s<t$, and the curve $K$ has the following parametric representation in terms of $z$:
$$
\overrightarrow{\bf{x}}(z)= \left(x_1(z), x_2(z)\right)=\left(\rho(z),\varphi(\rho(z))\right).
$$
Choose $b^{\,\prime}=\rho(t-3\delta)<b$ close to $b$ and consider a non-negative function $ \tau\in C^\infty(0,+\infty)$ satisfying the conditions:
\begin{align}\label{tau}
\tau(z)=\left\{\begin{array}{ccl}0&\text{when}&z\notin [t-3\delta, t],\\
\varepsilon \sin^2(\nu z)&\text{when}&z\in [t-2\delta, t-\delta].\end{array}\right.
\end{align}
Here $\varepsilon$ and $\nu$ are positive real numbers satisfying certain restrictions which will be explained in the sequel.
Consider the arc
\begin{align*}\label{arcl}
l_\delta^{(1)}=\{(x_1,x_2)\in K: b^{\,\prime}\leqslant x_1\leqslant b\}
=\{\overrightarrow{\bf{x}}(z):t-3\delta\leqslant z\leqslant t\}
\end{align*}
and determine on the arc measure $\mu_1$ whose density with respect to $z$ equals
$\tau(z).$
Select $\varepsilon$ in \eqref{tau} so that $Q-\mu_1$ is a measure in $\Pi.$
Next, shift the
mass on $l_\delta^{(1)}$
with density $\tau(z)$
by $d=\varphi (b^{\,\prime}) - \varphi (a)$ units down so that the shifted arc $l_\delta^{(2)}=l_\delta^{(1)}-(0,d)$ has starting point at $(b^{\,\prime}, \varphi(a))$ and terminal point at $(b,\varphi (b)-d).$
Denote this measure $\mu_2$:
$$
\mu_2(B)=\mu_1(B+(0,d))
$$
where
$B$ is a Borel set in the plane and
$B+(0,d)= \{(x_1,x_2+d):(x_1,x_2)\in B\}.$
Arc $l_\delta^{(1)}$ has to be small enough
to ensure that
$\varphi(b)-\varphi(b^{\,\prime})<\varphi(b^{\,\prime})-\varphi(a)$ (this follows from the smallness of $\delta$).
The performed shifting defines a singular
measure $q_1$ in  rectangle $\Pi$ by:
\begin{equation*}
q_1(B)=(Q-\mu_1)(B)+\mu_2(B),
\end{equation*}
The construction of $q_1$ implies that the projections of $Q$ and $q_1$ on the $x_1$-axis are the same.
However, to guarantee the equal projections on both coordinate axes, the procedure has to be continued. Consider measure $\mu_2$ concentrated on $l_\delta^{(2)}$ and find $a^{\,\prime}>a$
from the condition $\varphi(a^{\,\prime})=\varphi(a)+(\varphi(b)-\varphi(b^{\,\prime})).$ When $\delta$ is small enough, it can be stated that $a^{\,\prime}<b^{\,\prime}.$ Let $\mu_3$ be the measure on arc
\begin{equation*}
l_\delta^{(3)}=\{(x_1,x_2):x_2=\varphi(x_1), a\leqslant x_1\leqslant a^{\,\prime}\}
\end{equation*}
possessing the same projection on the $x_2$-axis as $\mu_2.$
Measure $\mu_3$ is well-defined since function $\varphi$ is strictly increasing. It has to be noticed that, when $\varepsilon >0$ is small enough, the difference $Q-\mu_3$ is a measure on
$l_\delta^{(3)}.$ Finally, shift the mass defined by $\mu_3$ on $l_\delta^{(3)}$ as a whole by $d$ units up, that is, define measure $\mu_4(B)=\mu_3(B-(0,d))$ concentrated on $l_\delta^{(4)}=l_\delta^{(3)}+(0,d)$. As a result, we obtain a singular
measure on rectangle $\Pi=[a,b]\times [\varphi (a), \varphi(b)]$ given by:
\begin{equation*}\label{pnew}
\tilde{Q}=Q-\mu_1+\mu_2-\mu_3+\mu_4\,.
\end{equation*}
Here, $\delta$ and $\varepsilon$ are assumed to be sufficiently small to stipulate all the conditions mentioned above and the positivity of $\tilde{Q}.$ In addition, the projections of $\tilde{Q}$ on the coordinate axes coincide with the distributions of $\xi_1$ and $\xi_2$
on  $[a,b]$ and  $[\varphi(a),\varphi(b)]$ respectively,
that is, possess the given densities $f_1$ and $f_2$
on  $[a,b]$ and  $[\varphi(a),\varphi(b)]$. As before, denote by $g(z,\tilde Q)$ the density of $\xi_1\cdot\xi_2$ in the case when a joint distribution of $\xi_1$ and $\xi_2$ coincides with $\tilde Q$ in $\Pi.$
If $\delta >0$ is  small enough to ensure that arcs $l_\delta^{(2)}$ and $l_\delta^{(4)}$ have no intercepts with $\Gamma_z,$ then $g(z,\tilde Q)=g(z,\tilde P)$,  where $g(z,\tilde P)$ is given by \eqref{pz}.  To prove that  when $\nu$ is large enough, the derivative of $g(z,\tilde Q)$ with respect to $z$ can take  arbitrary large positive and large negative values, first notice that,
  for $z\in [t-2\delta, t-\delta]$ the following equality holds:
\begin{equation*}
g(z,\tilde Q)=(f_1(x_1(z))-\varepsilon \sin^2(\nu z))\frac{x_1(z)}{z+\varphi^\prime(x_1(z))x_1^2(z)}.
\end{equation*} Applying \eqref{rhoz}, this can be restated as follows:
\begin{equation}\label{gzgz}
g(z,\tilde Q)=(f_1(\rho(z))-\varepsilon \sin^2(\nu z))\frac{\rho(z)}{z+\varphi^\prime(\rho(z))\rho^2(z)}.
\end{equation}
It has to be pointed out that $x_1=\rho(z)$ is a continuously differentiable function of $z$. To check this, recall that $\rho(z)\cdot \varphi(\rho(z))=z,$ whence
\begin{align*} \rho(z+h)-\rho(z)=\frac{z+h}{\varphi(\rho(z+h))}-\frac{z}{\varphi(\rho(z))}\\
=\frac{-z\left(\varphi(\rho(z+h))-\varphi(\rho(z))\right)+h\varphi(\rho(z))}{
\varphi(\rho(z+h))\varphi(\rho(z))}\\
=\frac{-z\varphi(\rho(z^*))\left[(\rho(z+h))-(\rho(z))\right]+h\varphi(\rho(z))}{
\varphi(\rho(z+h))\varphi(\rho(z))}.\end{align*}
The last equality is justified by The Mean Value Theorem since $\varphi$ is a differentiable function. By plain calculations one can obtain that:
\begin{equation*}\frac{\rho(z+h)-\rho(z)}{h}=\frac{\varphi(\rho(z))}{z\varphi(\rho(z^*))+\varphi(\rho(z+h))
\varphi(\rho(z))}.\end{equation*}
As $h\rightarrow 0,$ this leads to
\begin{equation*} \frac{d}{dz}\rho(z)=\frac{\varphi(\rho(z))}{z\varphi^\prime (\rho(z))+\varphi^2(\rho(z))}.\end{equation*}
Notice that the denominator in the right-hand side is strictly positive. Furthermore, with the help of \eqref{ivp} it can be concluded that $\varphi^{\prime\prime}$ exists, too.

Now, consider Lin's function for density \eqref{gzgz}:
\begin{equation*}L_g(z)=-z\frac{g^\prime(z;\tilde Q)}{g(z;\tilde Q)}.\end{equation*}
For $z\in [t-2\delta, t-\delta]$, the ratio $z/g(z;\tilde Q)$ is bounded, while the behaviour of
$g^\prime(z;\tilde Q)$ is governed by $\left(\varepsilon \sin^2(\nu z)\right)^\prime=\nu \varepsilon \sin (2\nu z),$ which oscillates rapidly taking arbitrary large in magnitude positive and negative values on $[t-2\delta, t-\delta]$ when $\nu$ is sufficiently large.

\bigskip Finally, in order to find a joint distribution of $\xi_1$ and $\xi_2,$ consider an infinite sequence of a disjoint
rectangles $\{\Pi_n\},\;\Pi_n=[a_n,b_n]\times [\varphi(a_n),\varphi(b_n)], a_n\to \infty.$
Denote by  $Q_n, n\in \mathbb{N},$ the restriction of measure $P$ on rectangle $\Pi_n$.
For each rectangle $\Pi_n,$ construct $\tilde{Q}_n$ as described above, making, at every step the derivatives $p^\prime(z_n^*)$ and $p^\prime(z_n^{**})$ as large in magnitude as necessary and of opposite signs. To complete the picture, set
\begin{equation*}
 P= \tilde P-\sum_{n=1}^{\infty}Q_n+ \sum_{n=1}^\infty\tilde Q_n.
\end{equation*}
It is clear that measure $ P$ satisfies all conditions stated in Theorem~\ref{th1}. \end{proof}


\end{document}